\crefname{conjecture}{Conjecture}{Conjectures}
\begin{document}

\title{A Fractional $3n+1$ Conjecture}

\author{%
    \'Eric Brier\inst{2} \and Rémi Géraud-Stewart\inst{1} \and David Naccache\inst{1}
} 

\institute{
    ÉNS (DI), Information Security Group,
CNRS, PSL Research University, 75005, Paris, France.\\
45 rue d'Ulm, 75230, Paris \textsc{cedex} 05, France\\
\email{\url{remi.geraud@ens.fr}},~~~~  
\email{\url{david.naccache@ens.fr}}
	\and
	\email{\url{eric.brier@polytechnique.org}}\\
}
\maketitle

\begin{abstract}
In this paper we introduce and discuss the sequence of \emph{real numbers} defined as $u_0 \in \mathbb R$ and $u_{n+1} = \Delta(u_n)$ where
\begin{equation*}
\Delta(x) = \begin{cases} \frac{x}{2} &\text{if } \operatorname{frac}(x)<\frac{1}{2} \\[4px] \frac{3x+1}{2} & \text{if } \operatorname{frac}(x)\geq\frac{1}{2} \end{cases}
\end{equation*}
This sequence is reminiscent of the famous Collatz sequence, and seems to exhibit an interesting behaviour. Indeed, we conjecture that iterating $\Delta$ will eventually either converge to zero, or loop over sequences of real numbers with integer parts $1,2,4,7,11,18,9,4,7,3,5,9,4,7,11,18,9,4,7,3,6,3,1,2,4,7,3,6,3$.

We prove this conjecture for $u_0 \in [0, 100]$. Extending the proof to larger fixed values seems to be a matter of computing power. The authors pledge to offer a reward to the first person who proves or refutes the conjecture completely --- with a proof published in a serious refereed mathematical conference or journal.
\end{abstract}

\section{Introduction}

Let, for all $x \in \mathbb R$, $\lfloor x\rfloor$ be the largest integer that is smaller or equal to $x$, and $\operatorname{frac} (x)=x - \lfloor x \rfloor$\footnote{These definitions hold for negative numbers as well, so that e.g. $\operatorname{frac}(-3/4) = 1/4$.}\textsuperscript{,}\footnote{Computer algebra systems may by default use another definition of the \enquote{fractional part} of a real number. Care is needed when implementing $\Delta$ with built-in functions.}. We then define the \emph{iteration function}:
\begin{equation*}
\Delta(x) = \begin{cases} \frac{x}{2} &\text{if } \operatorname{frac}(x)<\frac{1}{2} \\[4px] \frac{3x+1}{2} & \text{if } \operatorname{frac}(x)\geq\frac{1}{2} \end{cases}
\end{equation*}
Starting from any real $u_0$, the \emph{$\Delta$-sequence of seed $u_0$} is defined recursively as
\begin{align*}
    u_0, && u_1 & = \Delta(u_0), & u_2 & = \Delta(u_1) = \Delta^2(u_0), & \dotsc && u_n & = \Delta^n(u_0), \dotsc
\end{align*}
where $\Delta^n$ denotes $n$ successive applications of $\Delta$. This definition is reminiscent of the classical Collatz sequence, defined from the following iteration function\footnote{In this presentation, the division of $3x+1$ by $2$ is bundled together with it, as is common.} on the integers:
\begin{equation*}
    \textsf{Collatz}(x) = \begin{cases} \frac{x}{2} &\text{if } x \equiv 0 \pmod{2}\\[4px] \frac{3x+1}{2} & \text{if } x\equiv 1 \pmod{2} .\end{cases}
\end{equation*}
Note that $\Delta$-sequences never agree with a Collatz sequence on the integer (except trivially when the seed is $0$). A famously unsolved problem concerning the Collatz sequence is their limit cycle\footnote{Lagarias \cite{lagarias19853} likens the importance of this conjecture to the (still open) Catalan--Dickson conjecture on aliquot sequences or the (now proven) Last Theorem of Fermat.}:
\begin{conjecture}[Collatz 1937 \cite{col}]\label{conj:collatz}
    For any $x_0 \in \mathbb N_{>0}$, there exists $N \in \mathbb N$ such that $\operatorname{Collatz}^N(x_0) = 1$.
\end{conjecture}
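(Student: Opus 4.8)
The plan is, in principle, to establish two things for every positive integer seed: that the orbit under $\operatorname{Collatz}$ neither diverges to infinity nor enters a cycle other than $1 \to 2 \to 1$. I must say at the outset, however, that this is the celebrated \emph{Collatz problem}, open since 1937 and here merely cited as motivation; no line of attack below is known to close, and what follows is the natural strategy together with the precise point at which it stalls.

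First I would record the heuristic that makes the statement plausible. An odd step acts as $x \mapsto \frac{3x+1}{2} \approx \frac{3}{2}x$ and an even step as $x \mapsto \frac{x}{2}$, so a trajectory alternating one odd step ($\times\frac{3}{2}$) with one even step ($\times\frac{1}{2}$) multiplies by $\frac{3}{4}<1$ over each such pair. Modelling the deterministic parity sequence by independent fair coin flips, the per-step geometric mean multiplier is $\frac{\sqrt 3}{2}<1$, so orbits should contract on average and eventually reach $1$. This is only a heuristic: it replaces the actual parity sequence by a random one and says nothing about any individual seed.

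To make genuine progress I would split the statement into two exclusion tasks. For cycles, I would parametrise a hypothetical nontrivial cycle by its pattern of odd and even steps and reduce its existence to the near-vanishing of an expression of the form $2^{a}-3^{b}$ relative to $3^{b}$; sharp lower bounds on linear forms in logarithms (Baker--W\"ustholz) then force any such cycle to be astronomically long, and combining this with exhaustive machine verification of all seeds up to a large bound (currently beyond $2^{68}$) rules out short cycles outright. For the no-divergence half I would invoke the density results—most sharply Tao's theorem that almost all orbits, in logarithmic density, attain almost bounded values—to confine the possibly divergent seeds to a very thin exceptional set.

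The hard part, and the reason the conjecture is still open, is exactly that \emph{almost all} cannot presently be upgraded to \emph{all}. There is no known Lyapunov function, monotone invariant, or energy that decreases along every orbit; the obstruction is the entanglement of the multiplicative map $x \mapsto 3x$ with the additive $+1$ and the base-$2$ parity test, which defeats every attempt to build a global descent argument or to bound the total stopping time uniformly. Ruling out a single self-sustaining divergent trajectory—one whose parity sequence conspires to keep the drift above $1$ forever—is the crux, and no technique yet forces such a trajectory to be impossible.
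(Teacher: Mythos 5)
You have correctly diagnosed the situation: the statement you were given is \Cref{conj:collatz}, the original Collatz conjecture of 1937, which the paper states purely as motivation and explicitly describes as famously unsolved. The paper contains no proof of it, offers none, and indeed builds its entire contribution (the fractional variant $\Delta$ and \Cref{thm:conj}) around the fact that the integer version remains open. Your refusal to manufacture a proof is the right call, and your survey of the standard partial results --- the $\tfrac{3}{4}$ drift heuristic, the reduction of hypothetical cycles to lower bounds on $|2^a - 3^b|$ via linear forms in logarithms combined with exhaustive verification up to roughly $2^{68}$, and Tao's logarithmic-density result on almost all orbits --- accurately locates the state of the art and the precise obstruction (upgrading \emph{almost all} to \emph{all}, with no known Lyapunov function or global descent argument).

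The only thing to flag is that what you have written is not a proof and should not be presented as a proof attempt with a ``gap'' in the usual sense: the gap is the entire conjecture. There is no route here to compare against the paper's, because the paper takes none. If anything, your write-up could be sharpened by noting explicitly that the cycle-exclusion and divergence-exclusion halves are logically independent and that neither is close to resolution --- but as a description of why \Cref{conj:collatz} resists proof, your account is accurate and consistent with how the paper treats the statement.
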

An impressive body of mathematics has been produced to try and prove or disprove this claim. In 2019 Tao published a proof that almost all orbits of the Collatz map attain almost bounded values, which is perhaps the closest result to date to a complete proof of \Cref{conj:collatz} \cite{tao2019almost}.

Meanwhile, many natural generalisations of the Collatz sequence and conjecture have been proposed. Unfortunately they either happened to be (demonstrably) undecidable \cite{conway1972unpredictable}, harder than the original \cite{lagarias19853}, or trivial to solve. Our function $\Delta$, while only a variation rather than a generalisation, seems to avoid these two extremes: it seems to exhibit an interesting behaviour --- in particular a Collatz-like limit cycle --- in a context where more tools are available to study the resulting dynamical system. 
As such, study of the $\Delta$-sequences may provide new insights and tools that could, presumably, further our understanding of the Collatz sequence and other related problems. 

Computer simulation for many seed values led us to formulate the following:
\begin{conjecture}\label{conj:delta}
Iterating $\Delta$ will eventually either converge to zero, or loop over sequences of real numbers whose successive integer parts are:
\begin{equation*}
    1,2,4,7,11,18,9,4,7,3,5,9,4,7,11,18,9,4,7,3,6,3,1,2,4,7,3,6,3, \dotsc
\end{equation*}
This cycle has length 29.
\end{conjecture}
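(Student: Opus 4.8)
The plan is to prove \Cref{conj:delta} by a modular decomposition into three statements: (i) an \emph{absorbing-region theorem} asserting that every orbit, from any real seed, eventually enters a fixed bounded interval $[0,B]$; (ii) the bounded-interval analysis (already available for $B=100$), which forces the asymptotic dichotomy on that interval; and (iii) a short reduction sending negative seeds into the positive region. Granting (i)--(iii), any orbit is eventually trapped in $[0,B]$ and there obeys the dichotomy, which is exactly the conjecture. The virtue of this decomposition is that it isolates the Collatz-hard content into (i) alone.

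The crux is the absorbing region, and the driving observation is \emph{average contraction}: the two branches of $\Delta$ have slopes $\tfrac12$ and $\tfrac32$, with geometric mean $\sqrt{3}/2<1$. Thus if halving ($\operatorname{frac}<\tfrac12$) occurs with asymptotic density $\rho$, the multiplicative drift per step is $(\tfrac12)^{\rho}(\tfrac32)^{1-\rho}$, which is $<1$ precisely when $\rho>\log_3 2\approx 0.369$. I would set up a Lyapunov function $V(x)=\log|x|$ for $|x|$ large and reduce non-divergence to this single frequency bound $\rho>\log_3 2$. To obtain it, I would study the induced map on fractional parts: writing $x=\lfloor x\rfloor+\operatorname{frac}(x)$ and $F=\operatorname{frac}(x)$, the fractional part evolves by $F\mapsto\operatorname{frac}(F/2+c)$ (halving branch) or $F\mapsto\operatorname{frac}(3F/2+c)$ (tripling branch), where $c\in\{0,\tfrac12\}$ is determined by the parity of $\lfloor x\rfloor$ --- a piecewise-affine skew product over the integer-part parity sequence. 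I would then analyze its transfer (Perron--Frobenius) operator to show that the invariant density assigns mass strictly greater than $\log_3 2$ to $[0,\tfrac12)$, equivalently that runs of consecutive tripling steps cannot persist. This is exactly the real-analytic handle that is unavailable in the integer Collatz setting, and it is what makes the absorbing region plausibly provable here.

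To finish on $[0,B]$ I would invoke the symbolic analysis underlying the $[0,100]$ case: the itinerary (the word in halving/tripling branches) of every point is eventually either all-halving --- forcing $u_n\to 0$ --- or eventually the specific $29$-periodic word whose integer-part trace is the list in \Cref{conj:delta}. The associated $29$-fold composite is affine with slope $3^{17}/2^{29}\approx 0.24<1$ (the cycle contains $17$ tripling and $12$ halving steps), so the $29$-cycle is attracting and its integer-part pattern is exactly the stated one; the same symbolic statement simultaneously rules out other cycles and aperiodic bounded orbits inside the region. It is this forcing of itineraries, together with the discontinuity of $\Delta$ at the half-integers, that makes the strong ``every $u_0$'' claim --- rather than merely ``almost every $u_0$'' --- tenable, since boundary points between the two basins are absorbed by the jump rather than stranded. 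For negative seeds I would verify that a short initial segment carries any $x<0$ into $[0,B]$ or directly onto the ray $(0,\tfrac12)\to 0$, exactly as happens for $x=-1$.

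The genuine difficulty, I expect, is the absorbing-region step: it is the faithful analogue of the hardest open part of the Collatz problem, namely excluding divergent orbits, and although passing to real seeds exposes the fractional-part dynamics to ergodic-theoretic and transfer-operator tools, controlling the invariant measure sharply enough to certify $\rho>\log_3 2$ for \emph{every} orbit --- not merely for Lebesgue-almost every orbit, which is the most a Tao-style argument would deliver --- is where I anticipate the proof to stall. This gap between an almost-everywhere drift estimate and the pointwise, every-seed guarantee demanded by the conjecture is precisely why the full statement remains open while the bounded-interval case is within reach.
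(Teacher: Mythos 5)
You have not given a proof; you have given a research program, and the statement you are addressing is precisely the paper's open conjecture --- the paper itself proves it only for $u_0 \in [0,100]$ (\Cref{thm:conj}), by a computational method (the fixed point $x_0$ of $\Delta^{29}$ on $[\frac32, \frac{41}{27})$, where $\Delta^{29}$ is affine with contraction $3^{17}/2^{29}$, followed by backward interval extension), and explicitly offers a reward for a complete proof. The entire burden therefore falls on your step (i), the absorbing-region theorem, and nothing in the proposal establishes it; you concede this yourself, but it is worth naming why the proposed route stalls even before the ``a.e.\ versus every seed'' issue you flag. Your reduction to a skew product over ``the integer-part parity sequence'' is not a skew product over an independent base: the parity sequence is generated by the orbit itself, so the fractional-part map has no fixed driving system over which to average, and the transfer-operator analysis you invoke has no compact phase space to act on --- the unbounded integer coordinate is built into the dynamics, and its invariant measures are exactly the unknown. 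Even granting a spectral-gap argument for some natural invariant density, it would bound the halving frequency $\rho$ only for Lebesgue-almost every seed, which is the same ceiling Tao's method hits for Collatz, whereas \Cref{conj:delta} quantifies over every real seed. (A minor slip: the drift threshold is $\rho > \log_3(3/2) = 1 - \log_3 2 \approx 0.369$, not $\rho > \log_3 2$; your numerical value is right but the expression is wrong.)

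Your finishing step also assumes more than is available. The ``symbolic dichotomy'' you invoke on $[0,B]$ --- every itinerary is eventually all-halving or eventually the specific $29$-periodic word --- is exactly what the paper proves, and only for $B = 100$, by an interval-extension computation whose termination for larger $B$ is not guaranteed a priori: the paper notes that \Cref{conj:delta} is \emph{equivalent} to that extension process continuing forever. So even if (i) delivered an explicit absorbing bound $B$, step (ii) is an unbounded computation unless $B \leq 100$, and nothing in your drift heuristic forces the absorbing region into $[0,100]$; likewise the claim that boundary points are ``absorbed by the jump rather than stranded'' and the treatment of negative seeds are plausible but unproven. The details you do state about the bounded regime are accurate --- the cycle has $17$ tripling and $12$ halving steps with contraction $3^{17}/2^{29} \approx 0.24$, matching \Cref{lem:lemma2} and \Cref{lem:lemma3} --- and your decomposition correctly isolates the Collatz-hard content. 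But as it stands the proposal proves none of its three components beyond what the paper already establishes, and the central gap --- excluding divergent and rogue bounded orbits for \emph{every} seed --- remains fully open.
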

Similar to how Collatz sequences may reach large values before ultimately reaching the limit cycle, $\Delta$-sequences wander off but always seem to fall back to the conjectured situation. This is backed up by numerical simulation (see following example and \Cref{fig:bifurc}).

\begin{figure}[!hp]
    \centering
    \includegraphics[width=\textwidth]{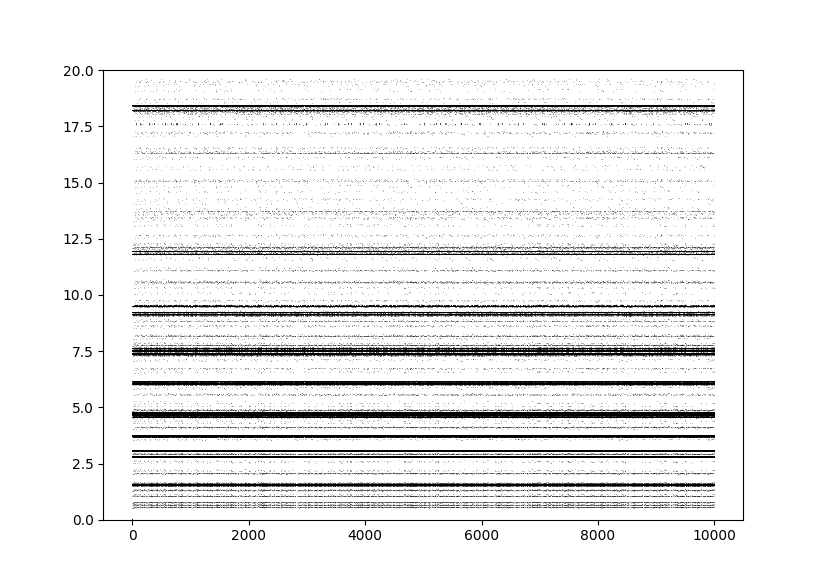}
    \caption{Values taken by the first $10^4$ items of $\Delta$-sequences as a function of the initial value $u_0$. The strongest lines correspond to values taken the most often, which are those in the conjectured cycle.}
    \label{fig:bifurc}
\end{figure}

\begin{example}[Numerical simulation] The following table shows successive iterations of $\Delta$, starting from $u_0 = 27$. At the \nth{28} iteration, the sequence enters the conjectured limit cycle (starting with $1.52...$).
\begin{center}
\texttt{\small{
\begin{tabular}{|c|c|c|c|c|c|c|c|c|c|c|c|c|c|c|c|}\hline
~start:~~&27.00& 13.50& 6.750& 10.63& 16.44& 8.219& 4.109& 2.055& 1.027& 0.514& 1.271& 0.635& 1.453& 0.726& 1.590\\
&2.884& 4.827& 7.740& 12.11& 6.055& 3.028& 1.514& 2.771& 4.656& 7.484& 3.742& 6.113& 3.056&      &      \\\hline
~cycle 1:~~&1.528& 2.792& 4.689& 7.533& 11.80& 18.20& 9.099& 4.550& 7.325& 3.662& 5.993& 9.490& 4.745& 7.618& 11.93\\ 
&18.39& 9.195& 4.597& 7.396& 3.698& 6.047& 3.024& 1.512& 2.768& 4.651& 7.477& 3.739& 6.108& 3.054&\\\hline
~cycle 2:~~&1.527& 2.790& 4.686& 7.529& 11.79& 18.19& 9.095& 4.547& 7.321& 3.660& 5.991& 9.486& 4.743& 7.615& 11.92\\
&18.38& 9.191& 4.596& 7.394& 3.697& 6.045& 3.023& 1.511& 2.767& 4.650& 7.476& 3.738& 6.107& 3.053&$\ldots$\\\hline
\end{tabular}}}
\end{center}
\end{example}
The central result of this paper is the following:
\begin{theorem}\label{thm:conj}
\Cref{conj:delta} is true for $0 \leq u_0 \leq 100$.
\end{theorem}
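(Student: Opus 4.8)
The plan is to treat $\Delta$ as a \emph{piecewise affine} map and reduce \Cref{conj:delta} on $[0,100]$ to a finite, exact-arithmetic verification. The only place the formula for $\Delta$ changes is where $\operatorname{frac}$ crosses $\tfrac12$, i.e.\ on the half-integer lattice $\tfrac12\mathbb Z$; on each piece $[n,n+\tfrac12)$ it is affine with slope $\tfrac12$ (the halving branch, call it $H$) and on each $[n+\tfrac12,n+1)$ it is affine with slope $\tfrac32$ (the tripling branch $T$). Both branches are increasing, so the image of an interval lying within a single branch is again an interval, and on any interval all of whose seeds share the first $N$ \emph{itinerary} symbols the iterate $\Delta^N$ coincides with one affine map of slope $(\tfrac12)^{a}(\tfrac32)^{b}$, where $a+b=N$ and $b$ counts the $T$'s. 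I would exploit this to propagate whole intervals of seeds at once rather than individual reals.

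First I would pin down the two attractors named in the conjecture. Seeds in $[0,\tfrac12)$ satisfy $\operatorname{frac}<\tfrac12$, hence are halved forever and converge to $0$; this is the ``converge to zero'' alternative, and $[0,\tfrac12)$ is manifestly forward invariant. For the cycle, reading off the itinerary of the conjectured integer-part pattern gives a word $\sigma\in\{H,T\}^{29}$ with exactly $17$ tripling and $12$ halving steps. The composite $\Delta^{29}$ along $\sigma$ is a single affine map over $\mathbb Q$ with slope $s=3^{17}/2^{29}\approx 0.24<1$; solving the resulting linear fixed-point equation yields an exact rational periodic point $p$, and a finite check confirms that $p,\Delta(p),\dots,\Delta^{28}(p)$ realise $\sigma$ with every fractional part \emph{strictly} on its prescribed side of $\tfrac12$. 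Because $s<1$, I can inflate these $29$ points to a union $N$ of small rational intervals and verify, branch by branch, that $\Delta(N)\subseteq N$ and that $\Delta^{29}|_N$ contracts; this certifies that \emph{any} orbit entering $N$ stays in $N$ forever with integer parts following exactly the claimed length-$29$ pattern.

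It then remains to show that every seed in $[0,100]$ is funnelled into $[0,\tfrac12)\cup N$. I would do this by an \emph{interval worklist} computation in exact rational arithmetic: starting from $[0,100]$, repeatedly take an interval $I$ and (i) retire it if $I\subseteq[0,\tfrac12)$ or $I\subseteq N$; (ii) if $I$ lies within a single branch, replace it by its affine image $\Delta(I)$; (iii) otherwise split $I$ at the half-integers it contains and re-enqueue the pieces. Each retired interval then carries a certificate that all of its seeds converge to $0$ or enter the cycle, so termination of this search with an empty worklist is exactly a proof of \Cref{thm:conj}. Keeping all endpoints rational means no rounding is ever incurred, so the certificate is fully rigorous.

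The main obstacle is \textbf{termination}, and behind it an \emph{a priori excursion bound}. The tripling branch has slope $\tfrac32>1$, so images can grow, leave $[0,100]$ upward, and straddle ever more breakpoints, spawning new intervals at each split; nothing local forbids an orbit from wandering for a long time before capture. To guarantee the worklist empties I must show that orbits from $[0,100]$ remain inside some explicit bounded box $[0,M]$ and that along every itinerary actually realised the halving steps are frequent enough --- $T$-density below $\log 2/\log 3\approx 0.631$, as holds for $\sigma$, whose density is $17/29\approx 0.586$ --- to make the dynamics net contracting and the number of live intervals stay finite. Quantifying this excursion/contraction trade-off, and thereby bounding the size of the computation, is the crux; once a finite box and a termination argument are in hand, the remainder is a (possibly lengthy) finite verification, which is precisely the ``matter of computing power'' alluded to above.
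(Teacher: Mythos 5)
Your proposal is essentially correct and rests on the same two pillars as the paper: the exact rational $29$-periodic point whose return map has slope $3^{17}/2^{29}<1$ (the paper's Lemmas~1--3, which work on the single interval $I=[\tfrac32,\tfrac{41}{27}[$ where $\Delta^{29}$ is one affine contraction, rather than your union $N$ of $29$ small intervals), the trivial forward-invariance of $[0,\tfrac12)$, and a finite exact-arithmetic reachability computation. Where you genuinely differ is in how that computation is organised. The paper extends the certified set \emph{monotonically outward} from $I$: it repeatedly takes the current upper bound $b$, follows the forward orbit of $b$ symbolically until it lands in the already-certified interval, and solves for how far to the right of $b$ the same itinerary persists, appending a new, larger bound (reaching $20.77$ after $227$ steps, then $100.77$ after $3626$); a small auxiliary lemma ($\max_{\ell=0}^{6}\Delta^{\ell}([\tfrac12,\tfrac32])\subset[\tfrac32,20]$) patches the gap below $\tfrac32$. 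You instead push all of $[0,100]$ \emph{forward} as a worklist of intervals split at half-integers. Both are rigorous; yours avoids the special-case lemma for $[\tfrac12,\tfrac32]$ and yields per-seed stopping-time data, while the paper's version keeps the state to a single growing bound and cannot suffer combinatorial blow-up. Your worry about needing an \emph{a priori} excursion bound is, however, overstated for the bounded statement of \Cref{thm:conj}: neither method needs such a bound in advance, because empirical termination of the exact computation \emph{is} the proof (a compactness argument shows your worklist empties in finite time whenever the theorem is true, just as the paper simply observes that its bound passes $100$). The real cost of your direction is practical rather than logical --- orbits from $[0,100]$ reach values near $6\times10^{4}$, so your intervals straddle tens of thousands of breakpoints and the worklist may grow very large before it drains --- and the \emph{a priori} termination question you flag becomes essential only for the full \Cref{conj:delta}, where it is exactly the open problem the paper identifies (eliminating the plateaux).
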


\section{Proof of \Cref{thm:conj}}
The proof strategy is as follows: we show that $\Delta$-sequences enter a uniquely-determined length-29 cycle when the seed belongs to an interval $I$. Then we extend this interval iteratively by showing that values in a larger interval $I_1$ eventually fall inside of $I$ when repeatedly applying $\Delta$. We repeat this process until the extended interval contains $[0, 100]$. 

Clearly the limit of 100 is arbitrary, and it is only a matter of computational power to obtain larger intervals. 

\subsection{The initial interval}
\begin{lemma} The number
\begin{equation*}
    x_0 = \frac{616136875}{407730749} = \frac{616136875}{2^{29} - 3^{17}}= 1.51113664228448956\ldots
\end{equation*}
is a fixed point of $\Delta^{29}$.
\end{lemma}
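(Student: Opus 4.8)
The plan is to exploit the fact that $\Delta$ is piecewise affine: where $\operatorname{frac}(x)<\tfrac12$ it acts as $x\mapsto\tfrac12 x$, and where $\operatorname{frac}(x)\ge\tfrac12$ it acts as $x\mapsto\tfrac32 x+\tfrac12$. Hence, once the sequence of branches $b_0,\dots,b_{28}\in\{0,1\}$ taken during $29$ iterations is fixed, $\Delta^{29}$ coincides on that region with a single affine map $x\mapsto Ax+C$, whose slope $A=(\tfrac12)^{n_0}(\tfrac32)^{n_1}=3^{n_1}/2^{29}$ depends only on the number $n_1$ of $\tfrac{3x+1}{2}$-steps (with $n_0+n_1=29$). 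This already explains the denominator in the statement: reading the branch word off the integer parts listed in \Cref{conj:delta} (an upward step $n\mapsto\lfloor\tfrac32 n\rfloor$ signals $b_k=1$, a downward step $n\mapsto\lfloor\tfrac n2\rfloor$ signals $b_k=0$) gives exactly $n_1=17$ and $n_0=12$, so $A=3^{17}/2^{29}$ and $1-A=(2^{29}-3^{17})/2^{29}$.

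Given this, $C$ is a finite telescoping sum read off from the same branch word, and solving the fixed-point equation $Ax_0+C=x_0$ yields the unique fixed point $x_0=C/(1-A)=2^{29}C/(2^{29}-3^{17})$ of that affine map; a direct evaluation of the sum confirms $2^{29}C=616136875$, matching the stated numerator. Since $A<1$ the map is a contraction, so this fixed point is attracting --- which is also what makes a neighbourhood of $x_0$ a sensible starting interval $I$ for the global argument.

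The step I expect to be the real content is \emph{self-consistency}: one must verify that iterating the genuine $\Delta$ from this exact $x_0$ actually selects, at each of the $29$ steps, the branch that was assumed. Writing $x_k=\Delta^k(x_0)$, I would check for every $k$ that $\operatorname{frac}(x_k)<\tfrac12$ exactly when $b_k=0$ and $\operatorname{frac}(x_k)\ge\tfrac12$ exactly when $b_k=1$. Each $x_k$ is rational with a denominator dividing $2^{29}(2^{29}-3^{17})$ (and $2^{29}-3^{17}$ is odd), so every such comparison, as well as the final equality $\Delta^{29}(x_0)=x_0$, is an exact finite check requiring no floating-point reasoning; in particular I would confirm that no $x_k$ lands on a half-integer, where the branch choice could be ambiguous. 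Equivalently and most simply, one may just run $29$ steps of $\Delta$ on $x_0$ in exact rational arithmetic, which performs these branch selections automatically and returns $x_0$; the affine analysis above is then what explains \emph{why} the orbit closes up and takes the stated closed form.
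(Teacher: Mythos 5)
Your proposal is correct and its core verification --- iterating $\Delta$ twenty-nine times on $x_0$ in exact rational arithmetic and checking the orbit closes up --- is exactly the paper's proof, which is a one-line direct computation in Mathematica (\Cref{app:lemma1}). Your additional affine/branch-word analysis explaining \emph{why} the fixed point has the form $2^{29}C/(2^{29}-3^{17})$ is sound and is essentially the content the paper defers to \Cref{lem:lemma2}.
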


\begin{proof} Direct computation, see \Cref{app:lemma1}.
\qed 
\end{proof}

\begin{lemma}\label{lem:lemma2}
Over the interval $I=\left[\frac{3}{2},\frac{41}{27}\right[$,
\begin{equation*}
\Delta^{29}(x)=\frac{3^{17} x + 616136875}{2^{29}}.
\end{equation*}
\end{lemma}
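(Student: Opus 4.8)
The plan is to exploit the fact that on each of its two branches $\Delta$ is an \emph{increasing affine} map: write $D(x)=x/2$ for the branch taken when $\operatorname{frac}(x)<\tfrac12$ and $U(x)=(3x+1)/2$ for the branch taken when $\operatorname{frac}(x)\ge\tfrac12$. Consequently, if one can show that as $x$ ranges over the \emph{whole} interval $I$ the same branch is selected at every one of the $29$ iterations, then $\Delta^{29}$ restricted to $I$ is a composition of a \emph{fixed} word of $29$ affine maps, hence itself affine, say $\Delta^{29}(x)=ax+b$. Following the orbit of an interior point — for instance the fixed point $x_0$ of the preceding lemma, or equivalently reading off the integer parts $1,2,4,7,\dots$ along the conjectured cycle — pins down this word, a step being a $U$-step exactly when the current iterate has fractional part $\ge\tfrac12$. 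One checks that the resulting word contains $17$ occurrences of $U$ and $12$ of $D$, so that the linear coefficient is $a=(3/2)^{17}(1/2)^{12}=3^{17}/2^{29}$, already matching the claimed slope.

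First I would make this branch word rigorous on the full interval by induction on the iteration index $i$. Assuming the first $i$ branches are correct for \emph{every} $x\in I$, the partial composition $\Delta^{i}$ agrees on $I$ with a single increasing affine map, so $\Delta^{i}(I)$ is again an interval whose endpoints are the images of $\tfrac32$ and $\tfrac{41}{27}$. It then suffices to verify that this image interval lies entirely inside one half-integer cell $[n,n+\tfrac12)$ or $[n+\tfrac12,n+1)$, which fixes the $(i+1)$-st branch uniformly on $I$ and closes the induction. Because the maps are monotone, each verification reduces to a single comparison of two rational endpoints against a half-integer.

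The choice of endpoints is precisely what makes $I$ the maximal such interval and explains its half-open shape. On the left, $\operatorname{frac}(\tfrac32)=\tfrac12\ge\tfrac12$ places $\tfrac32$ on the $U$-branch, whereas any smaller seed would already fall on the $D$-branch at step $0$, so $I$ must be closed at $\tfrac32$. On the right, three $U$-steps carry $x=\tfrac{41}{27}$ exactly to $\tfrac{15}{2}=7.5$; since $\operatorname{frac}(7.5)=\tfrac12$ this would trigger a $U$-branch at step $3$, while every $x<\tfrac{41}{27}$ stays below $7.5$ and takes the $D$-branch there, so $I$ must be open at $\tfrac{41}{27}$. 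I expect this bookkeeping — running the two endpoint orbits through all $29$ steps in exact rational arithmetic and confirming that no intermediate interval straddles a half-integer — to be the main (if routine) obstacle, the delicate points being the steps where the fractional part comes closest to $\tfrac12$, notably step $3$.

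Finally, rather than expand the $29$-fold composition to extract $b$, I would pin it down via the fixed point. Since $a=3^{17}/2^{29}$ is already known and $x_0\in I$ is a fixed point of $\Delta^{29}$, the relation $a x_0+b=x_0$ gives $b=x_0\,(1-a)=x_0\cdot\dfrac{2^{29}-3^{17}}{2^{29}}$. Substituting $x_0=\dfrac{616136875}{2^{29}-3^{17}}$ from the preceding lemma collapses the factor $2^{29}-3^{17}$ and yields $b=\dfrac{616136875}{2^{29}}$, whence $\Delta^{29}(x)=\dfrac{3^{17}x+616136875}{2^{29}}$ throughout $I$, as claimed.
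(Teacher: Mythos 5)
Your proposal is correct and follows essentially the same route as the paper: the paper's proof likewise tracks the images of both endpoints of $I$ through all $29$ iterations (via Mathematica, using a $\leq$ variant of $\Delta$ on the upper endpoint to respect the half-open interval --- precisely your observation that $\tfrac{41}{27}$ lands exactly on $\tfrac{15}{2}$ after three steps), confirms that each image interval selects a single branch, and then composes the resulting fixed word of $29$ affine maps. The only real difference is in extracting the constant term: the paper symbolically expands the composition to read off $616136875/2^{29}$, whereas you derive it more cleanly from the slope $3^{17}/2^{29}$ (your count of $17$ $U$-steps and $12$ $D$-steps is correct) together with the fixed point $x_0$ of the preceding lemma --- a legitimate, non-circular shortcut since that lemma is proved by direct iteration.
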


\begin{proof}
Starting from any $x \in I$, we expand the computation graph at each iteration. Simplifying the expression yields the given formula, see \Cref{app:lemma2}.  
 \qed 
\end{proof}

\begin{lemma}\label{lem:lemma3}
The value $x_0$ is an \emph{attractive} fixed point of $\Delta^{29}$, i.e.,
\begin{equation*}
\forall x\in I, \quad \lim_{i\rightarrow{\infty}}\Delta^{29i}(x)=x_0.
\end{equation*}
\end{lemma}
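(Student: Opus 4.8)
The plan is to exploit the explicit affine form of $\Delta^{29}$ on $I$ furnished by \Cref{lem:lemma2}. Write $f(x) := \Delta^{29}(x) = \lambda x + \beta$ for $x \in I$, where $\lambda = 3^{17}/2^{29}$ and $\beta = 616136875/2^{29}$. First I would observe that $0 < \lambda < 1$: indeed $3^{17} = 129140163 < 536870912 = 2^{29}$, so $f$ is an increasing affine contraction, and its unique fixed point is precisely $\beta/(1-\lambda) = 616136875/(2^{29}-3^{17}) = x_0$, the value identified in the first lemma. One also checks $x_0 \in I$, since $3/2 = 1.5 < 1.5111\ldots < 1.5185\ldots = 41/27$.

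Because $f$ is affine, its iterates have the closed form $f^i(x) - x_0 = \lambda^i\,(x - x_0)$, whence $f^i(x) \to x_0$ geometrically for every $x$. The subtlety is that \Cref{lem:lemma2} only identifies $\Delta^{29}$ with $f$ on $I$; to legitimately write $\Delta^{29i}(x) = f^i(x)$ one must guarantee that the orbit never leaves $I$. I would therefore establish the invariance $f(I) \subseteq I$. Since $f$ is increasing and $I = [3/2, 41/27[$, this reduces to the two endpoint inequalities $f(3/2) > 3/2$ and $f(41/27) < 41/27$; writing them as $(1-\lambda)(x_0 - 3/2) > 0$ and $(1-\lambda)(41/27 - x_0) > 0$ respectively shows they follow automatically from $0 < \lambda < 1$ together with $3/2 < x_0 < 41/27$. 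These are finite rational inequalities that can be certified by exact arithmetic.

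With invariance in hand, a straightforward induction on $i$ gives $\Delta^{29i}(x) = f^i(x)$ for all $i \geq 0$ and all $x \in I$: at each stage the current iterate lies in $I$, so \Cref{lem:lemma2} may be reapplied. The closed form above then yields $\lim_{i\to\infty}\Delta^{29i}(x) = \lim_{i\to\infty}\bigl(x_0 + \lambda^i(x-x_0)\bigr) = x_0$ uniformly on $I$, which is the claim.

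I expect the only genuine obstacle to be the verification of invariance rather than the contraction estimate itself. The contraction argument is essentially the Banach fixed point theorem specialised to an affine self-map and is routine; the point that requires care is that \Cref{lem:lemma2}'s identity holds \emph{only} on $I$, so the argument truly needs the containment $f(I) \subseteq I$ and not merely the fact that $f$ contracts distances towards $x_0$. Once that containment is secured, everything else follows from the explicit affine formula.
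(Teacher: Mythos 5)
Your proof is correct and follows essentially the same route as the paper: both use the affine form $\Delta^{29}(x)=\lambda x+\beta$ from \Cref{lem:lemma2} with $\lambda=3^{17}/2^{29}<1$ and conclude by contraction towards the fixed point $x_0=\beta/(1-\lambda)\in I$. In fact you are somewhat more careful than the paper's two-line argument, since you explicitly verify the invariance $f(I)\subseteq I$ needed to iterate the formula of \Cref{lem:lemma2}, a point the paper leaves implicit.
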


\begin{proof}
Using \Cref{lem:lemma2}, we can easily compute the slope of $\Delta^{29}$, which equals $3^{17}/2^{29} < 1$. Since $x_0\in I$, successive iterations of $\Delta^{29}$ there converge to the fixed point. \qed 
\end{proof}
\begin{remark}
Note that \Cref{lem:lemma3} also determines the behaviour on $I$ of $\Delta^{29i+j}$ for $1<j<29$ when $i\to{\infty}$. 
\end{remark}

\subsection{Extending the interval}
Denote $I_0=I$. The strategy now consists in finding a new interval $I_1$, such that for every element $x\in I_1$ there exists an integer $\ell$ such that $\Delta^\ell(x) \in I$. In other words, $x \in I_1$ is eventually sent to $I$, where \Cref{lem:lemma3} applies, and therefore ultimately reaches the conjectured cycle.

We repeat this process on $I_1$, obtaining an interval $I_2$ such that every point in $I_2$ is eventually sent by $\Delta$ to $I_0\cup I_1$. Iterating, we therefore construct a sequence of increasingly larger intervals $J_k = I_0\cup\ldots\cup I_k$, in which the following invariant will hold true:
\begin{equation*}
    \forall x \in J_k,  \exists {\ell} \text{ s.t. } \Delta^{\ell}(x) \in I.
\end{equation*}
Therefore, by \Cref{lem:lemma3}, $x$ ultimately enters the conjectured cycle: proving \Cref{thm:conj} amounts to computing $J_0, J_1, \dotsc, J_k$ until $J_k \supseteq [0, 100]$. 

\begin{lemma}
\Cref{conj:delta} is true for $u_0 \in [\frac32, 20]$.
\end{lemma}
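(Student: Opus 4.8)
The plan is to apply the interval-extension strategy just described, starting from the base interval $I=I_0=\left[\frac32,\frac{41}{27}\right[$ on which \Cref{lem:lemma3} guarantees convergence to the cycle. First I would partition the target interval $\left[\frac32,20\right]$ into finitely many subintervals on each of which the ``symbolic trajectory'' of $\Delta$ is constant --- that is, on each piece the branch taken ($\frac{x}{2}$ versus $\frac{3x+1}{2}$) at every iteration depends only on the subinterval, not on the particular $x$. The breakpoints arise precisely where $\operatorname{frac}(\Delta^k(x))$ crosses $\frac12$, equivalently where $\Delta^k(x)$ meets a half-integer. Because each branch of $\Delta$ is an increasing affine map, any finite composition $\Delta^\ell$ restricted to one such piece is again affine and monotone; the refinement of the partition is therefore obtained by pulling back the half-integer grid through these affine maps.

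Next, for each subinterval $[a,b)$ of the partition I would track its image $\Delta^\ell([a,b))$ as $\ell$ increases, checking at each step whether the image has landed inside $I_0$ (or, more generally, inside the already-constructed union $J_{k-1}$). Since the maps are affine and monotone, it suffices to follow the two endpoints $\Delta^\ell(a)$ and $\Delta^\ell(b)$: the image is exactly the interval they bound. The goal for each piece is to exhibit an explicit $\ell$ with $\Delta^\ell([a,b)) \subseteq I$; once this holds, \Cref{lem:lemma3} finishes the argument for every seed in that piece. Concretely I would set $I_1,I_2,\dotsc$ to be the new subintervals absorbed at each stage and iterate until the accumulated union $J_k$ covers $\left[\frac32,20\right]$.

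The one subtlety requiring care is the half-open boundary condition: the branch of $\Delta$ is decided by the sharp inequality $\operatorname{frac}(x)<\frac12$ versus $\operatorname{frac}(x)\geq\frac12$, so I must verify that each subinterval in the partition is chosen with the correct open/closed endpoints, and that the interval $I$ we are aiming for is entered as a genuine subset respecting its half-open form $\left[\frac32,\frac{41}{27}\right[$. A seed landing exactly on a half-integer (or exactly on $\frac{41}{27}$) must be assigned unambiguously, and endpoints propagate through the affine maps, so bookkeeping the inequalities is where errors would creep in.

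The main obstacle I expect is not conceptual but combinatorial and computational: the number of subintervals, and the number of iterations $\ell$ needed before a given piece falls into $I$, can both grow as the trajectories ``wander off'' to large values before returning --- exactly the behaviour illustrated by the $u_0=27$ example and \Cref{fig:bifurc}. Controlling this blow-up, and certifying that every one of the finitely many pieces does eventually return (rather than escaping or entering a spurious near-cycle), is the delicate part; it is precisely here that exact rational arithmetic is essential, since floating-point evaluation of $\operatorname{frac}(\Delta^k(x))$ near a half-integer could select the wrong branch and invalidate the partition. I would therefore carry out the endpoint tracking in exact arithmetic over $\mathbb{Q}$, which keeps the branch decisions provably correct at the cost of the computational effort already acknowledged in the surrounding discussion.
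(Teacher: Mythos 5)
Your proposal is correct and is essentially the paper's own method: the code in the appendix performs exactly this symbolic-trajectory tracking in exact rational arithmetic, the only organisational difference being that instead of precomputing a partition of $[\frac32,20]$ it greedily extends the right endpoint $b$ of the validated interval, at each stage solving for the largest $m$ such that every $x\in[b,m)$ follows $b$'s branch pattern until its orbit re-enters the already-validated region $[\frac32,b)$. The half-integer breakpoints and the need for exact arithmetic that you flag are precisely what the implementation handles (via the $\lfloor 2t+1\rfloor/2$ pullbacks), reaching $20.77$ after $227$ such extensions.
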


\begin{proof}
The proof consists in computing the backwards graph of $\Delta$, see \Cref{app:lemma4}. 
\qed 
\end{proof}
Two additional results are necessary to get the lower bound from $3/2$ down to zero. The first is the trivial observation that if $0 \leq x < 1/2$ then the sequence converges to $0$. The second is the following:
\begin{lemma} $\max_{\ell=0}^6 \Delta^\ell([\frac12, \frac32]) \subset [\frac32, 20]$.
\end{lemma}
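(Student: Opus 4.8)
The plan is to exploit the fact that $\Delta$ is piecewise affine and that only finitely many iterates are involved, thereby reducing the claim to a finite, exact rational computation. Concretely, on each interval $[n, n+\tfrac12)$ one has $\Delta(x)=x/2$ (slope $\tfrac12$), while on $[n+\tfrac12, n+1)$ one has $\Delta(x)=(3x+1)/2$ (slope $\tfrac32$); both branches are continuous and strictly increasing, and the only breakpoints lie at the half-integers, where the convention $\operatorname{frac}=\tfrac12\mapsto(3x+1)/2$ applies. I would first record these two branches together with the images of the three relevant starting pieces of $[\tfrac12,\tfrac32]$, namely $[\tfrac12,1)$ (expanding branch), $[1,\tfrac32)$ (contracting branch), and the endpoint $\tfrac32$.

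First I would build a finite partition of $[\tfrac12,\tfrac32]$ into subintervals with exact rational endpoints on which the whole length-$6$ trajectory follows a single, fixed sequence of branches. This is done iteratively: starting from the partition above, I apply $\Delta$ to each current piece, and whenever the image interval straddles a half-integer I pull that breakpoint back through the affine map to split the domain piece, so that after the split $\Delta$ is again affine on each part. Repeating this refinement six times yields a partition $\mathcal P$ such that, for every $P\in\mathcal P$ and every $0\le\ell\le6$, the restriction $\Delta^\ell|_P$ is affine, strictly increasing, and continuous. Consequently $\Delta^\ell(P)$ is the closed interval bounded by the two endpoint values $\Delta^\ell(\inf P)$ and $\Delta^\ell(\sup P)$, all of which are exact rationals.

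With $\mathcal P$ in hand, the verification is purely mechanical. For each piece $P$ I compute the seven iterate-intervals $\Delta^0(P),\dots,\Delta^6(P)$ and take the pointwise maximum $M(x)=\max_{0\le\ell\le6}\Delta^\ell(x)$; as a maximum of increasing affine functions, $M$ is itself continuous and increasing on $P$, so it suffices to evaluate the two rational inequalities $M\ge\tfrac32$ and $M\le 20$ at the endpoints $\inf P$ and $\sup P$. The upper bound holds because the trajectories never climb past the top of the conjectured cycle (about $18.2$) within six steps; the lower bound encodes the fact that every seed in $[\tfrac12,\tfrac32]$ escapes upward through $\tfrac32$ in at most six iterations. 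Since $M(x)\in[\tfrac32,20]$ forces the iterate attaining it to already lie in $[\tfrac32,20]$, this shows that each $x\in[\tfrac12,\tfrac32]$ is sent into $[\tfrac32,20]$ by some $\Delta^\ell$ with $\ell\le6$, and the previous lemma then concludes.

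The main obstacle is not any single inequality but the bookkeeping: because the contracting branch (slope $\tfrac12$) and the expanding branch (slope $\tfrac32$) repeatedly push subintervals back and forth across the half-integer breakpoints, the number of pieces of $\mathcal P$ grows with each refinement, and one must track the exact rational endpoints and the correct branch --- in particular the half-integer boundary convention and the behaviour at integer points --- without error. This is precisely the kind of finite case analysis that is cleanly handled by an exact (rational) interval computation, so I would implement the refinement-and-check procedure symbolically rather than attempt it by hand.
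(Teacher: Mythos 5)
Your proposal is correct and follows essentially the same route as the paper: both rest on the observation that each $\Delta^\ell$ ($\ell\le 6$) is piecewise affine on $[\tfrac12,\tfrac32]$, so the pointwise maximum is piecewise affine and the claim reduces to a finite, explicit computation of its values. If anything, your exact rational refine-and-check procedure is more rigorous than the paper's verification, which computes the maximum numerically and appeals to a plot (with a zoom near the critical region around $u_0\approx 1$) to confirm containment in $[\tfrac32,20]$.
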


\begin{proof}
Each function $\Delta^\ell$ is piecewise linear, therefore so is the maximum of these functions. It is then possible to explicitly compute the values of $\max \Delta$ over the interval, which are (strictly) contained inside $[\frac32, 20]$.

A plot of this function is given in \Cref{fig:lemma5}, see \Cref{app:lemma5}. The function is so tame that the graphical representation accurately represents all values.
\qed 
\end{proof}

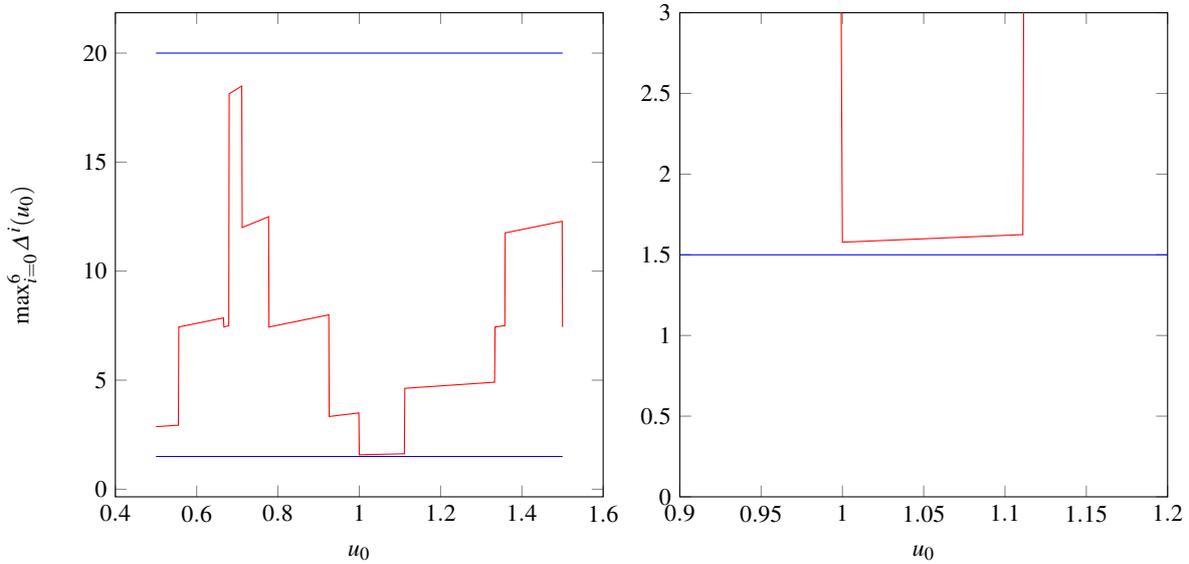
\begin{figure}[!ht]
    \centering
\begin{tikzpicture}
\begin{axis}[legend cell align = left,legend pos = north east,xlabel=$u_0$,ylabel=$\max_{i=0}^6 \Delta^i(u_0)$,width=0.5\textwidth,height=8cm]
\addplot[color=red, mark=.] table[x=x,y=y] {delta6.dat};
\addplot[color=blue, mark=.] table[x=x,y=min] {delta6.dat};
\addplot[color=blue, mark=.] table[x=x,y=max] {delta6.dat};
\end{axis}
\end{tikzpicture}
\begin{tikzpicture}
\begin{axis}[legend cell align = left,legend pos = north east,xlabel=$u_0$,width=0.5\textwidth,height=8cm,xmin=0.9,xmax=1.2,ymin=0,ymax=3]
\addplot[color=red, mark=.] table[x=x,y=y] {delta6.dat};
\addplot[color=blue, mark=.] table[x=x,y=min] {delta6.dat};
\addplot[color=blue, mark=.] table[x=x,y=max] {delta6.dat};
\end{axis}
\end{tikzpicture}
\caption{Graphical plot of $\max_{\ell=0}^6 \Delta^\ell$ over the interval $[1/2, 3/2]$ (left), and zoom on the interval $[0.9, 1.2]$ (right). The blue lines represents $y = 3/2$ and $y = 20$.}\label{fig:lemma5}
\end{figure}

Therefore, at the cost of up to 6 additional iterations, any value in the interval $[0, 20]$ either converges to zero (if $x < 1/2$) or enters the conjectured cycle. Reusing the program from \Cref{app:lemma4} on this extended interval, we can finally prove \Cref{thm:conj}:
\begin{proof}[of \Cref{thm:conj}]
The program
\begin{minted}[fontsize=\small]{mathematica}
Proof[1/2, 20, 101]
\end{minted}
reaches $100.77$ at the \nth{3626} iteration.\qed 
\end{proof}

\section{Open Problems}

\begin{description}
\item[Proving \Cref{conj:delta}.] Our approach turns a continuous problem into a discrete problem by exploiting the simple branching structure of $\Delta$. The interval extension algorithm is the bottleneck of this method, both theoretically and practically: indeed, the interval size grows as in \Cref{fig:plateaux}, exhibiting long plateaux (meaning no progress) of roughly similar length separated by abrupt climbs (meaning rapid progress). \Cref{conj:delta} is equivalent to the claim that the graph in \Cref{fig:plateaux} grows to infinity.

\smallskip

Improving the technique so that the plateaux are shorter would make the proof of \Cref{thm:conj} (and similar results) faster; eliminating them altogether would prove the conjecture.

\medskip

\item[Disproving \Cref{conj:delta}.]
Alternatively, one could look for a counterexample to \Cref{conj:delta}. As a result of \Cref{thm:conj}, such a value would have be at least larger than 100. It is unclear how to efficiently perform a search of this kind over the real line.

\smallskip

The authors offer \$61.6136875 (resp. €40.7730749) reward and a signed commemorative plaque to the first person who proves or disproves \cref{conj:delta}.

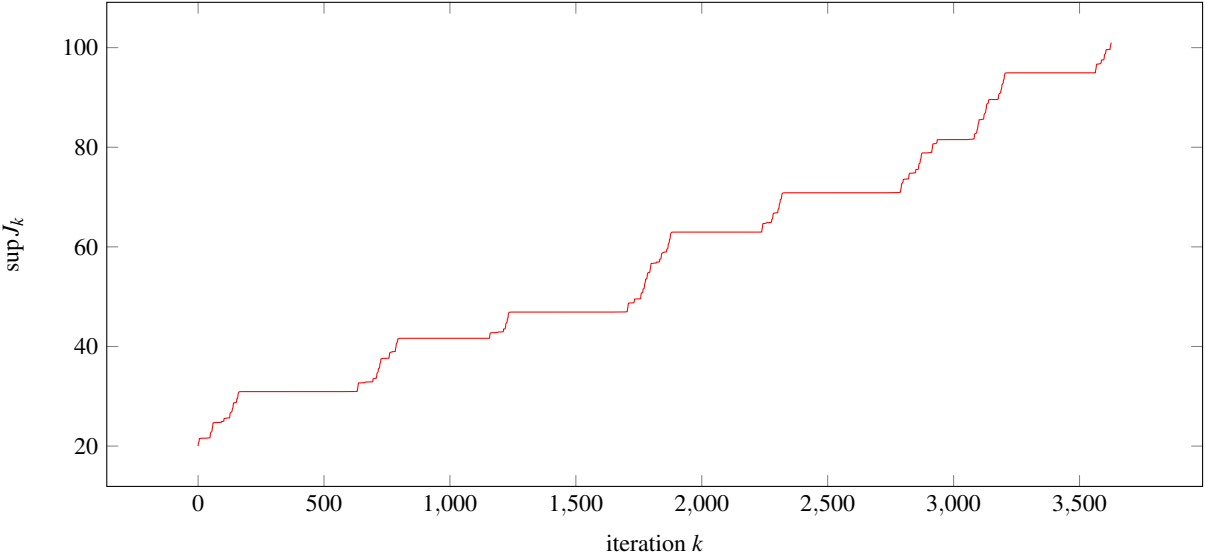
\begin{figure}[!hp]
    \centering
\begin{tikzpicture}
\begin{axis}[legend cell align = left,legend pos = north east,xlabel=$\text{iteration $k$}$,ylabel=$\sup J_k$,width=\textwidth,height=8cm]
\addplot[color=red, mark=.] table[x=x,y=tof] {plateaux.dat};
\end{axis}

\end{tikzpicture}
\caption{Size of the extended interval $J_k$ as a function of $k$, in the proof of \Cref{thm:conj}.}\label{fig:plateaux}
\end{figure}

\medskip 

\item[Stopping time.] If \Cref{conj:delta} turns out to be true, results on the stopping time (number of iterations required to reach the cycle) are still to be found. Experimentally, it seems that almost all values smaller than 100 reach the cycle in fewer than 160 iterations, see \Cref{fig:stoppingtime,fig:stoppingtime2}. 

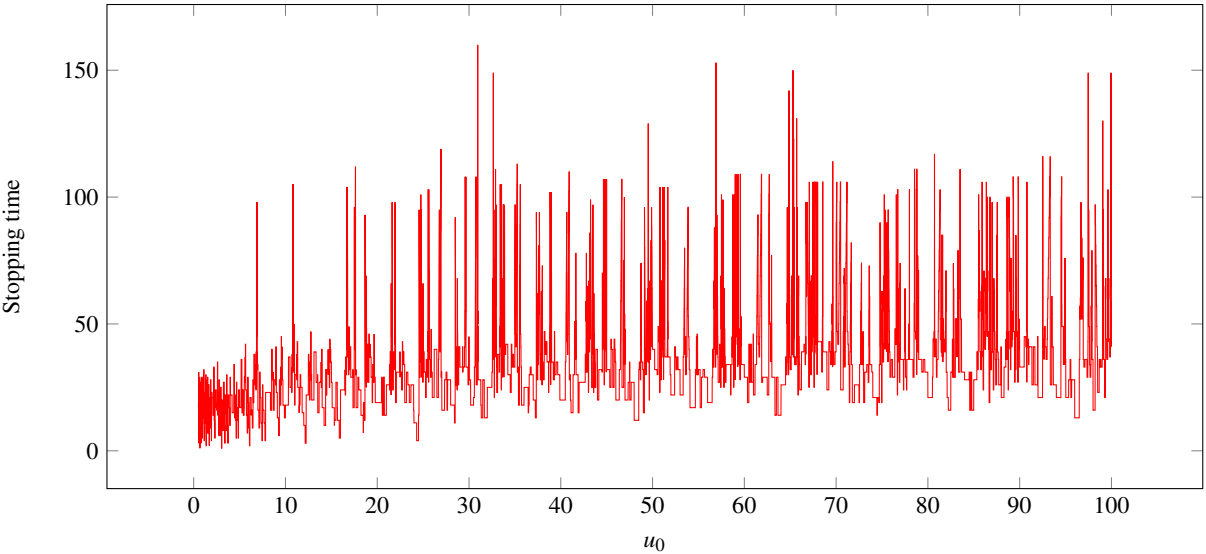
\begin{figure}[!h]
    \centering
    \begin{tikzpicture}
\begin{axis}[legend cell align = left,legend pos = north east,xlabel=$u_0$,ylabel=$\text{Stopping time}$,width=\textwidth,height=8cm]
\addplot[color=red, mark=.] table[x=x,y=tof] {ratios.dat};
\end{axis}

\end{tikzpicture}
    \caption{Stopping times for $u_0 \in [\frac12, 100]$.}
    \label{fig:stoppingtime}
\end{figure}

\begin{figure}[!h]
    \centering
\begin{tikzpicture}[x=1pt,y=1pt]
\begin{scope}
\definecolor{drawColor}{RGB}{0,0,0}

\node[text=drawColor,anchor=base,inner sep=0pt, outer sep=0pt, scale=  1.00] at (264.94, 15.60) {$u_0$};

\end{scope}
\begin{scope}[yshift=-1cm]
\definecolor{drawColor}{RGB}{0,0,0}

\path[draw=drawColor,line width= 0.4pt,line join=round,line cap=round] ( 65.18, 61.20) -- (439.74, 61.20);

\path[draw=drawColor,line width= 0.4pt,line join=round,line cap=round] ( 65.18, 61.20) -- ( 65.18, 55.20);

\path[draw=drawColor,line width= 0.4pt,line join=round,line cap=round] (190.03, 61.20) -- (190.03, 55.20);

\path[draw=drawColor,line width= 0.4pt,line join=round,line cap=round] (314.89, 61.20) -- (314.89, 55.20);

\path[draw=drawColor,line width= 0.4pt,line join=round,line cap=round] (439.74, 61.20) -- (439.74, 55.20);

\node[text=drawColor,anchor=base,inner sep=0pt, outer sep=0pt, scale=  1.00] at ( 65.18, 39.60) {0};

\node[text=drawColor,anchor=base,inner sep=0pt, outer sep=0pt, scale=  1.00] at (190.03, 39.60) {50};

\node[text=drawColor,anchor=base,inner sep=0pt, outer sep=0pt, scale=  1.00] at (314.89, 39.60) {100};

\node[text=drawColor,anchor=base,inner sep=0pt, outer sep=0pt, scale=  1.00] at (439.74, 39.60) {150};

\end{scope}
\begin{scope}[yshift=0.25cm,yscale=0.5]
\definecolor{drawColor}{RGB}{0,0,0}

\path[draw=drawColor,line width= 0.4pt,line join=round,line cap=round] ( 65.18, 75.85) rectangle ( 77.67, 96.18);

\path[draw=drawColor,line width= 0.4pt,line join=round,line cap=round] ( 77.67, 75.85) rectangle ( 90.15,102.89);

\path[draw=drawColor,line width= 0.4pt,line join=round,line cap=round] ( 90.15, 75.85) rectangle (102.64,204.34);

\path[draw=drawColor,line width= 0.4pt,line join=round,line cap=round] (102.64, 75.85) rectangle (115.12,312.71);

\path[draw=drawColor,line width= 0.4pt,line join=round,line cap=round] (115.12, 75.85) rectangle (127.61,330.11);

\path[draw=drawColor,line width= 0.4pt,line join=round,line cap=round] (127.61, 75.85) rectangle (140.09,442.04);

\path[draw=drawColor,line width= 0.4pt,line join=round,line cap=round] (140.09, 75.85) rectangle (152.58,372.45);

\path[draw=drawColor,line width= 0.4pt,line join=round,line cap=round] (152.58, 75.85) rectangle (165.06,336.40);

\path[draw=drawColor,line width= 0.4pt,line join=round,line cap=round] (165.06, 75.85) rectangle (177.55,255.91);

\path[draw=drawColor,line width= 0.4pt,line join=round,line cap=round] (177.55, 75.85) rectangle (190.03,158.85);

\path[draw=drawColor,line width= 0.4pt,line join=round,line cap=round] (190.03, 75.85) rectangle (202.52,124.27);

\path[draw=drawColor,line width= 0.4pt,line join=round,line cap=round] (202.52, 75.85) rectangle (215.00,109.60);

\path[draw=drawColor,line width= 0.4pt,line join=round,line cap=round] (215.00, 75.85) rectangle (227.49,100.58);

\path[draw=drawColor,line width= 0.4pt,line join=round,line cap=round] (227.49, 75.85) rectangle (239.97, 98.91);

\path[draw=drawColor,line width= 0.4pt,line join=round,line cap=round] (239.97, 75.85) rectangle (252.46, 92.41);

\path[draw=drawColor,line width= 0.4pt,line join=round,line cap=round] (252.46, 75.85) rectangle (264.94, 90.73);

\path[draw=drawColor,line width= 0.4pt,line join=round,line cap=round] (264.94, 75.85) rectangle (277.43, 86.96);

\path[draw=drawColor,line width= 0.4pt,line join=round,line cap=round] (277.43, 75.85) rectangle (289.92, 89.47);

\path[draw=drawColor,line width= 0.4pt,line join=round,line cap=round] (289.92, 75.85) rectangle (302.40, 88.63);

\path[draw=drawColor,line width= 0.4pt,line join=round,line cap=round] (302.40, 75.85) rectangle (314.89, 92.62);

\path[draw=drawColor,line width= 0.4pt,line join=round,line cap=round] (314.89, 75.85) rectangle (327.37, 85.70);

\path[draw=drawColor,line width= 0.4pt,line join=round,line cap=round] (327.37, 75.85) rectangle (339.86, 86.96);

\path[draw=drawColor,line width= 0.4pt,line join=round,line cap=round] (339.86, 75.85) rectangle (352.34, 78.36);

\path[draw=drawColor,line width= 0.4pt,line join=round,line cap=round] (352.34, 75.85) rectangle (364.83, 76.90);

\path[draw=drawColor,line width= 0.4pt,line join=round,line cap=round] (364.83, 75.85) rectangle (377.31, 76.06);

\path[draw=drawColor,line width= 0.4pt,line join=round,line cap=round] (377.31, 75.85) rectangle (389.80, 76.48);

\path[draw=drawColor,line width= 0.4pt,line join=round,line cap=round] (389.80, 75.85) rectangle (402.28, 76.06);

\path[draw=drawColor,line width= 0.4pt,line join=round,line cap=round] (402.28, 75.85) rectangle (414.77, 75.85);

\path[draw=drawColor,line width= 0.4pt,line join=round,line cap=round] (414.77, 75.85) rectangle (427.25, 76.06);

\path[draw=drawColor,line width= 0.4pt,line join=round,line cap=round] (427.25, 75.85) rectangle (439.74, 76.69);

\path[draw=drawColor,line width= 0.4pt,line join=round,line cap=round] (439.74, 75.85) rectangle (452.22, 76.06);

\path[draw=drawColor,line width= 0.4pt,line join=round,line cap=round] (452.22, 75.85) rectangle (464.71, 76.06);
\end{scope}
\end{tikzpicture}
    \caption{Histogram of stopping times for $u_0 \in [\frac12, 100]$.}
    \label{fig:stoppingtime2}
\end{figure}
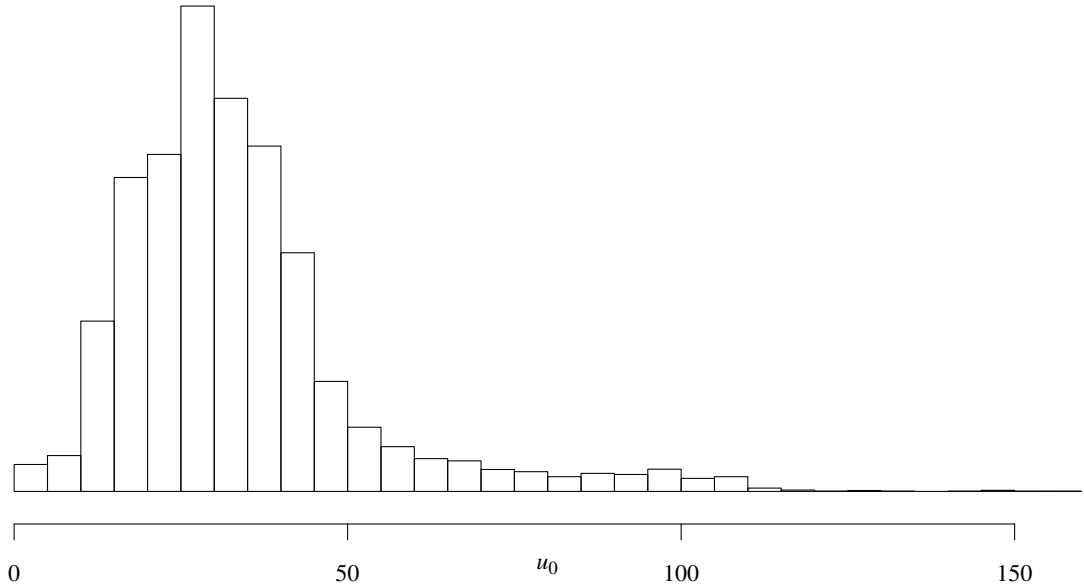

\medskip

\item[Maximal value.] Similarly, the maximum value reached by the sequence is an open question. Experimentally this maximum seems to be extremely sensitive about the initial conditions: for instance $u_0 = 30.94$ reaches $4662$, $u_0 = 30.944$ reaches $14956$, and $u_0 = 30.95$ reaches $59502$. 

\medskip 
\item[Relative phase.]
Given two seed values $(u, v) \in \mathbb R^2$, each producing a sequence that reaches the conjectured cycle, we can define the \emph{relative phase} of these numbers as the integer offset $\phi(u, v)$ between the two cycles --- since the time of flight to each sequence may vary substantially, we only consider the relative difference modulo 29. This makes it possible to provide a graphical representation of $\phi$ over $\mathbb R^2$ as in \Cref{fig:phase}. Rapid phase variations correlate with a high sensitivity to initial conditions.

\begin{figure}[!h]
    \centering
    \includegraphics[width=0.45\textwidth]{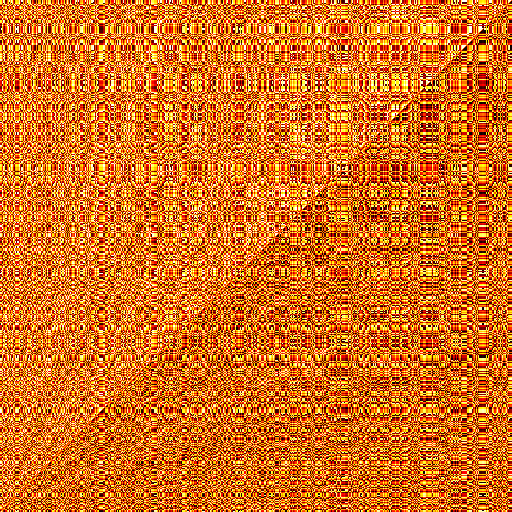}
    \includegraphics[width=0.45\textwidth]{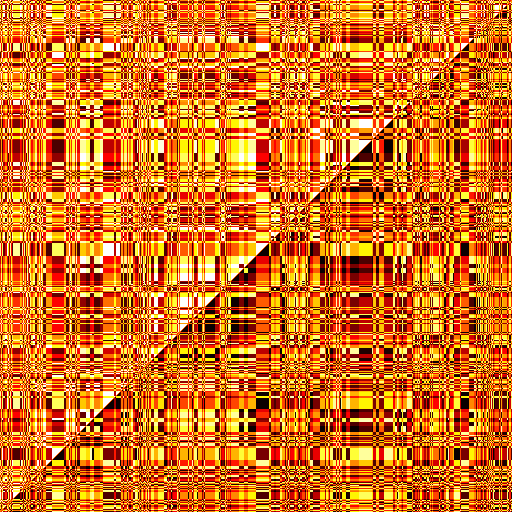}\\
    \includegraphics[width=0.45\textwidth]{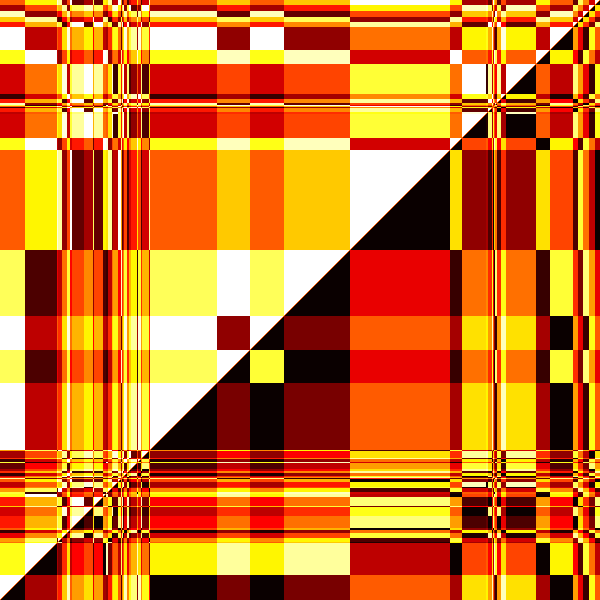}
    \caption{(Left) Relative phase for $(u, v) \in [\frac12, 100]^2$, corresponding to the region covered by \Cref{thm:conj}. Features appear there that could hint at structures holding clues about the sequence's behaviour. (Right) Relative phase in the region $(u,v) \in [70, 90]^2$. (Bottom) Relative phase in the region $(u,v) \in [\frac{318}{4}, \frac{325}{4}]^2$, showing the coexistence of rapidly varying phase and large areas of constant relative phase.}
    \label{fig:phase}
\end{figure}

\medskip

\item[Generalisations.] The coefficients in our definition of $\Delta$ are inspired by those of the Collatz function. Replacing $1/2$, $x/2$, or $(3x+1)/2$ in this definition by other quantities yields an infinite family of sequences whose properties remain to be investigated. For instance, is it possible to construct a variant of the $\Delta$ sequence that reaches a cycle of given, but arbitrary length?
\medskip 

\item[Extension to other Collatz variants.] The authors noted that when we replace in the original Collatz definition the ``if $x \equiv 0 \pmod{2}$'' case by  $\frac{x}{2}+c$ then we get convergence to different families of cycles. For instance for $x>2$ the choice $c=1$ causes convergence to one of 6 cycles of respective lengths 3, 4, 8, 8, 44, 44 whose minimal representatives are 7, 4, 21, 25, 189, 349. Convergence frequencies to those are very unbalanced with one cycle totaling $\simeq 50\%$ cases and another $\simeq 20\%$. For $c=3$ and $x\neq 6$ we get 10 cycles. $c=5$ and $x\neq3,10$ results in 2 cycles. $c=7$ and $x\neq 14$ yields 3 cycles etc. Studying fractional generalizations of those variants is yet another research target. For instance, the fractional sequence for $c=1$ \textsl{seems to} converge to the much shorter cycle $\frac{22}{5}\rightarrow\frac{16}{5}\rightarrow\frac{13}{5}$ which might be easier to analyze. Establishing general links between such fractional sequences and their integer versions may shed additional light on both our observations and Collatz-type sequences over $\mathbb{N}$.

\end{description}

\bibliographystyle{alpha}
\bibliography{coll.bib}

\appendix

\section{Mathematica code}
\subsection{Lemma 1}\label{app:lemma1}
The following code produces \texttt{True}:
\begin{minted}[fontsize=\small]{mathematica}
DeltaL[x_] := If[FractionalPart[x] < 0.5, x/2, (3x+1)/2];
x0 = 616136875 / 407730749;
Nest[DeltaL, x0, 29] == x0
\end{minted}

\subsection{Lemma 2}\label{app:lemma2}
The following Mathematica code proves that starting at any $x\in I$ we stitch at each iteration either an $x/2$ branch or a $(3x+1)/2$ branch:
\begin{minted}[fontsize=\small]{mathematica}
DeltaR[x_] := If[FractionalPart[x] <= 0.5, x/2, (3x+1)/2];
II[x_,y_]  := { DeltaL[x], DeltaR[y] };
r = N[ NestList[ II@@#&, {3/2, 41/27}, 29 ], 3 ]
\end{minted}
This code produces the following output:
\begin{center}
    
\texttt{\small{
\begin{tabular}{cccccc}
\{\{1.50, 1.52\},& \{2.75, 2.78\},& \{4.63, 4.67\},& \{7.44, 7.50\},& \{3.72, 3.75\},&\{6.08, 6.13\},\\
  \phantom{.}\{3.04, 3.06\},& \{1.52, 1.53\},& \{2.78, 2.80\},& \{4.67, 4.70\},&\{7.50, 7.54\},&\{11.8, 11.8\},\\ 
 \phantom{.}\{18.1, 18.2\},& \{9.07, 9.11\},& \{4.53, 4.56\},&\{7.30, 7.33\}, & \{3.65, 3.67\},& \{5.97, 6.00\},\\
 \phantom{.}\{9.46, 9.50\},& \{4.73, 4.75\},&\{7.60, 7.62\},& \{11.9, 11.9\},& \{18.3, 18.4\},& \{9.17, 9.20\},\\
 \phantom{.}\{4.59, 4.60\}, &\{7.38, 7.40\},& \{3.69, 3.70\},& \{6.03, 6.05\},& \{3.02, 3.03\},&\{1.51, 1.51\}\}\\
\end{tabular}}}
\end{center}
We can hence proceed and automatically compose:\smallskip
\begin{minted}[fontsize=\small]{mathematica}
For[ i=1, i <= 29,
     x[i+1] = If[ FractionalPart[r[[i,1]]] < 0.5, x[i]/2, (3x[i]+1)/2 ];
     i++];
\end{minted}
The further command \texttt{Expand[x[30]]} results in the printing of $\Delta^{29}$.

\subsection{Lemma 4}\label{app:lemma4}

Run the following Mathematica code:
\begin{minted}[fontsize=\small]{mathematica}
Proof := Function[{aIn, bIn, bOut},
  blst = {bIn}; i = 0;
  Print[Dynamic[{i, N[b, 100]}]];
  While[Max[blst] < bOut,
   b = blst[[-1]];
   x =.; t = b; f = x; m = Infinity;
   While[Not[b > t >= aIn],
    nm = Solve[f == Floor[2 t + 1]/2][[1, 1, 2]];
    m = Min[nm, m];
    f = If[FractionalPart[t] < 1/2, f, 3 f + 1]/2;
    t = DeltaL[t];
    ];
   nm = Solve[f == b, x][[1, 1, 2]];
   m = Min[nm, m];
   AppendTo[blst, m];
   i++]]
Proof[3/2, 41/27, 21]
\end{minted}
At iteration $227$ the bound $m$ reaches $20.77$.

\subsection{Lemma 5}\label{app:lemma5}
The following Mathematica code was used to generate \Cref{fig:lemma5}.

\begin{minted}[fontsize=\small]{mathematica}
Plot[{1.5, Max[NestList[DeltaL, x, 6]]}, {x, 0.5, 1.5}, PlotRange -> {1, 20}]
Plot[{1.5, Max[NestList[DeltaL, x, 6]]}, {x, 0.5, 1.5}, PlotRange -> {1, 2}]
\end{minted}

\section{Python/Numpy code}
The following Python/Numpy code can be used to produce the $\Delta$-sequence. It was used to generate the \enquote{tartan graphs} of \Cref{fig:phase} and the diagram of \Cref{fig:bifurc}.
\begin{minted}{python}
import numpy as np

def Delta(x):
    fracpart, _ = np.modf(x)
    return np.where(fracpart < 1/2, x/2, (3*x+1)/2)

# Run all iterations in parallel
x = np.linspace(0, 10)
for _ in range(1000):
    x = Delta(x)

# Each value is either 0 or in the cycle
print(x)
\end{minted}

\end{document}